\newtheorem{thm}{Theorem}[section]
\newtheorem{prop}[thm]{Proposition}
\newtheorem{theorem}[thm]{Theorem}
\newtheorem{proposition}[thm]{Proposition}
\theoremstyle{definition}
\newtheorem{definition}[thm]{Definition}
\newtheorem*{ack}{Acknowledgement}
\newtheorem{remark}[thm]{Remark}
\title{Characterizations of locally finite actions of groups on sets}
\date{}
\author{Eduardo Scarparo}
\thanks{This work was supported by CNPq, National Council for Scientific and Technological Development - Brazil.}
\address{ Department of Mathematical Sciences, University of Copenhagen,\newline
Universitetsparken 5, DK-2100, Copenhagen, Denmark}
 \email{duduscarparo@gmail.com}
\begin{document}

\begin{abstract}

We show that an action of a group on a set $X$ is locally finite if and only if $X$ is not equidecomposable with a proper subset of itself. As a consequence, a group is locally finite if and only if its uniform Roe algebra is finite.

\end{abstract}
\maketitle
\section{Introduction}

Given a group acting on a set $X$, a property that has been well-studied is the existence of an invariant mean on $X$, that is, amenability of the action (see \cite{glasnermonod} for historical remarks). By Tarski's Theorem (\cite[Corollary 9.2]{s1993banach}), this is equivalent to $X$ not being equidecomposable with two disjoint subsets of itself.

 In this note, we address the following question: given an action of a group $G$ on a set $X$, when is $X$ not equidecomposable with a proper subset of itself? We show that this holds if and only if the action is locally finite (Definition \ref{lf}), if and only if $\ell^\infty(X)\rtimes_r G$ is a finite $\mathrm{C}^*$-algebra (Theorem \ref{inha}). It follows from this that a group is locally finite if and only if its uniform Roe algebra ($\ell^\infty(G)\rtimes_{\mathrm{r}}G$) is finite (Proposition \ref{roe}). In \cite{kmr}, it was shown that $\ell^\infty(G)\rtimes_{\mathrm{r}}G$ is finite if $G$ is locally finite and asked if the converse holds.

 It was already known that amenability of a group $G$ is equivalent to $\ell^\infty(G)\rtimes_{\mathrm{r}}G$ not being properly infinite, and supramenability is equivalent to $\ell^\infty(G)\rtimes_{\mathrm{r}}G$ not containing any properly infinite projections  (\cite[Proposition 5.3]{kmr}). Therefore, Proposition \ref{roe} completes the dictionary between equidecomposition properties of groups and the type of projections in the uniform Roe algebra.

\section{Characterizations of locally finite actions of groups on sets}
We start by recalling some definitions:

\begin{definition}
Let be $G$ be a group acting on a set $X$. Two subsets $A$ and $B$ of $X$ are said to be \textit{equidecomposable} if there are finite partitions $\{A_i\}_{i=i}^n$ and $\{B_i\}_{i=i}^n$ of $A$ and $B$, respectively, and elements $s_1,\dots,s_n\in G$ such that $B_i=s_iA_i$ for $1\leq i \leq n$. When we say that two subsets of $G$ are equidecomposable, it is with respect to the left action of $G$ on itself.
\end{definition}

The next definition has already been introduced in \cite{doi:10.1080/00927872.2015.1053903} for actions on semilattices.

\begin{definition}\label{lf}
An action of a group $G$ on a set $X$ is said to be \textit{locally finite} if, for every finitely generated subgroup $H$ of $G$ and every $x\in X$, the $H$-orbit of $x$ is finite.

\end{definition}

The left action of a group on itself is locally finite if and only if the group is locally finite.

The following result shows that the notion of locally finite action is a natural strengthening of the notion of amenable action on a set.

\begin{theorem}\label{inha}
Let $G$ be a group acting on a set $X$. The following conditions are equivalent:

\begin{enumerate}

\item The action is locally finite;

\item $\ell^\infty(X)\rtimes_r G$ is finite;

\item $X$ is not equidecomposable with a proper subset of itself;

\item No subset of $X$ is equidecomposable with a proper subset of itself.

\end{enumerate}

\end{theorem}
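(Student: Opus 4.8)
The plan is to prove the cycle of implications $(1)\Rightarrow(2)\Rightarrow(3)\Rightarrow(4)\Rightarrow(1)$, treating the two analytic links and the two combinatorial links separately. I expect the genuinely new content, and the crux of the argument, to be the implication $(4)\Rightarrow(1)$: assuming the action is not locally finite, I must manufacture a subset of $X$ that is equidecomposable with a proper subset of itself.

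The two easy links I would dispatch by contraposition. For $(3)\Rightarrow(4)$: if some $A\subseteq X$ is equidecomposable with a proper subset $B\subsetneq A$, then adjoining to the given partition of $A$ the single piece $X\setminus A$ (moved by the identity) exhibits $X$ as equidecomposable with $(X\setminus A)\cup B$, which is proper since $A\setminus B\neq\varnothing$. For $(2)\Rightarrow(3)$: given an equidecomposition of $X$ with a proper subset $Y$, witnessed by partitions $\{A_i\}$ and $\{B_i=s_iA_i\}$, form the elements $w_i=u_{s_i}\chi_{A_i}$ in $\ell^\infty(X)\rtimes_r G$, where $u_s$ denotes the canonical unitaries. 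These are partial isometries with $w_i^\ast w_i=\chi_{A_i}$ and $w_iw_i^\ast=\chi_{B_i}$; since both the initial projections and the final projections form orthogonal families, the cross terms vanish and $v=\sum_i w_i$ satisfies $v^\ast v=\chi_X=1$ and $vv^\ast=\chi_Y\neq 1$. Thus $v$ is a non-unitary isometry and the crossed product is not finite.

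For $(1)\Rightarrow(2)$ I would reduce to finitely generated subgroups. Writing $G$ as the directed union of its finitely generated subgroups $H$, one has $\ell^\infty(X)\rtimes_r G=\varinjlim_H\,\ell^\infty(X)\rtimes_r H$, and finiteness (indeed stable finiteness) is inherited by such inductive limits and by unital subalgebras, so it suffices to treat a fixed finitely generated $H$. Local finiteness makes every $H$-orbit finite, so $X$ is a disjoint union of finite $H$-invariant sets and, via the central invariant projections $\chi_O$, the algebra $\ell^\infty(X)\rtimes_r H$ embeds unitally into the $\ell^\infty$-product of the pieces $\ell^\infty(O)\rtimes_r H$. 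Each orbit is $H$-equivariantly $H/K$ with $[H:K]<\infty$, so by finite-index imprimitivity $\ell^\infty(O)\rtimes_r H\cong M_{[H:K]}(C^\ast_r(K))$, which is finite because $C^\ast_r(K)$ carries a faithful canonical trace and is therefore stably finite; a product of finite C*-algebras is finite. This direction is essentially the one already established in \cite{kmr} for the regular action.

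The crux is $(4)\Rightarrow(1)$, argued contrapositively. If the action is not locally finite, fix $H=\langle s_1,\dots,s_k\rangle$ and $x_0\in X$ with $O:=Hx_0$ infinite, and consider the Schreier graph of $O$ with respect to $s_1,\dots,s_k$: it is connected, locally finite and infinite, so by K\"onig's lemma it contains an infinite simple ray $o=v_0,v_1,v_2,\dots$, where $v_{i+1}=t_iv_i$ for some $t_i\in\{s_1^{\pm1},\dots,s_k^{\pm1}\}$. Grouping the ray-vertices according to which of these finitely many elements is used at each step, and moving each group by that element, realizes the shift $v_i\mapsto v_{i+1}$ as a map given piecewise by group elements; extending it by the identity on $O\setminus\{v_i:i\geq0\}$ yields an equidecomposition of $O$ with $O\setminus\{o\}$ using only finitely many pieces. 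Hence the subset $O\subseteq X$ is equidecomposable with a proper subset, contradicting $(4)$. The point to verify with care is that the shift along the ray, though it displaces infinitely many points, employs only the finitely many elements $s_1^{\pm1},\dots,s_k^{\pm1}$, which is exactly what makes it a legitimate equidecomposition; local finiteness is precisely the obstruction to such a ray, the complementary mechanism being that an injection of a finite orbit into itself is automatically onto.
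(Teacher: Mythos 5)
Your proposal is correct and follows the same overall route as the paper: the same cycle $(1)\Rightarrow(2)\Rightarrow(3)\Rightarrow(4)\Rightarrow(1)$, the same partial-isometry argument for $(2)\Rightarrow(3)$, the same one-extra-piece trick for $(3)\Rightarrow(4)$, and for the crux $(4)\Rightarrow(1)$ exactly the paper's idea: an infinite simple ray in the Schreier graph, partitioned according to which generator performs the next step, yielding an equidecomposition with the set minus its initial point (your notation $O\setminus\{o\}$ should read $O\setminus\{v_0\}$; the paper equidecomposes only the ray $\gamma$ rather than the whole orbit, but this is immaterial). The only real divergence is inside $(1)\Rightarrow(2)$, at the level of a single orbit: the paper takes the $H$-invariant faithful tracial state coming from the uniform probability measure on the finite orbit $X_i$ and composes it with the canonical conditional expectation to get a faithful trace on $\ell^\infty(X_i)\rtimes_r H$, whereas you invoke finite-index imprimitivity, $\ell^\infty(O)\rtimes_r H\cong M_{[H:K]}(C^*_r(K))$, and stable finiteness of $C^*_r(K)$; both work, but the paper's trace argument is more elementary and self-contained. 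Also note that you assert, rather than prove, that $\ell^\infty(X)\rtimes_r H$ embeds into the product $\prod_i\bigl(\ell^\infty(X_i)\rtimes_r H\bigr)$: this injectivity is not formally automatic and is the one point the paper takes care over, verifying it via a commuting square with the faithful canonical conditional expectations (alternatively, one can argue from the decomposition of the regular representation over the invariant subspaces $\ell^2(X_i\times H)$, which is presumably what your appeal to the central projections $\chi_O$ intends); in a complete write-up you should supply this step.
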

\begin{proof}

(1) $\Rightarrow$ (2). Since the inductive limit of finite unital $\mathrm{C}^*$-algebras with unital conneting maps is finite, it suffices to show that $\ell^\infty(X)\rtimes_{\mathrm{r}}H$ is finite for every finitely generated subgroup $H$ of $G$. Let $H$ be such a subgroup and $X=\sqcup_{i\in I} X_i$ be the partition of $X$ into its $H$-orbits.

 For every $i\in I$, the restriction map $\ell^\infty(X)\to\ell^\infty(X_i)$ is $H$-equivariant. Therefore, there is a homomorphism $\psi\colon\ell^\infty(X)\rtimes_\mathrm{r}H\to\prod(\ell^\infty(X_i)\rtimes_\mathrm{r}H)$. We claim that $\psi$ is injective. 

Let $\varphi\colon\ell^\infty(X)\rtimes_\mathrm{r}H\to\ell^\infty(X)$ and, for every $i\in I$, $\varphi_i\colon\ell^\infty(X_i)\rtimes_\mathrm{r}H\to\ell^\infty(X_i)$ be the canonical conditional expectations. Also let $\varphi_I\colon\prod(\ell^\infty(X_i)\rtimes_\mathrm{r}H)\to\prod\ell^\infty(X_i)$ be the product of the maps $\varphi_i$, and $T\colon\ell^\infty(X)\to\prod\ell^\infty(X_i)$ be the isomorphism which arises from the product of the restriction maps. The following diagram commutes:

\begin{equation*}
\begin{xymatrix}
{\ell^\infty(X)\rtimes_\mathrm{r}H\ar^{\psi}[r]\ar_{\varphi}[d]&\prod(\ell^\infty(X_i)\rtimes_\mathrm{r}H)\ar^{\varphi_I}[d]\\
\ell^\infty(X)\ar_{T}[r]&\prod\ell^\infty(X_i).
}
\end{xymatrix}
\end{equation*}
Since $\varphi$ is faithful, we conclude that $\psi$ is injective. Since the product of finite unital $\mathrm{C}^*$-algebras is finite, it suffices to show that $\ell^\infty(X_i)\rtimes_\mathrm{r}H$ is finite for every $i\in I$ in order to conclude that $\ell^\infty(X)\rtimes_\mathrm{r}H$ is finite.

Given $i\in I$, let $\tau_i$ be the tracial state on $\ell^\infty(X_i)$ which arises from the uniform probability measure on the finite set $X_i$. Since $\tau_i$ is $H$-invariant and faithful, it follows that the map $\tau_i\circ\varphi_i\colon\ell^\infty(X_i)\rtimes_\mathrm{r}H\to\mathbb{C}$ is a faithful tracial state. Therefore, $\ell^\infty(X_i)\rtimes_\mathrm{r}H$ is finite.

(2) $\Rightarrow$ (3). This follows from the fact that, if $A$ and $B$ are equidecomposable subsets of $X$, then the projections $1_A$ and $1_B$ are equivalent in $\ell^\infty(X)\rtimes_\mathrm{r} G$.

(3) $\Rightarrow$ (4). If $A\subset X$ is equidecomposable with $B\subsetneq A$, then $X=A\sqcup A^c$ is equidecomposable with $B\sqcup A^c\subsetneq X$.

(4) $\Rightarrow$ (1). Suppose that there is $H<G$ generated by a finite and symmetric set $S$ and $x\in X$ such that $Hx$ is infinite. Then there exists a sequence $(s_n)_{n\in\mathbb{N}}\subset S$ such that 

$$\forall n,m\in\mathbb{N} \colon n\neq m\Rightarrow s_n\cdots s_1x\neq s_m\cdots s_1x.$$

 The sequence $(s_n\cdots s_1x)_{n\in\mathbb{N}}$ can be seen as an infinite simple path in the graph whose vertex set is $Hx$ and whose edges come from $S$.

 We claim that $\gamma:=\{s_n\cdots s_1x\colon n\in\mathbb{N}\}$ is equidecomposable with $\gamma\setminus\{s_1x\}$. 

Given $s\in S$, let $A_s:=\{s_n\cdots s_1x\colon s_{n+1}=s\}$. It is easy to check that $\{A_s\}_{s\in S}$ partitions $\gamma$ and $\{sA_s\}_{s\in S}$ partitions $\gamma\setminus\{s_1x\}$. Hence, $\gamma$ is equidecomposable with its proper subset $\gamma\setminus\{s_1x\}$.

\end{proof}

We now proceed to give a characterization of locally finite groups which can be seen as an analogy to parts of \cite[Theorem 1.1]{kmr}.

The next definition is from \cite{shalom2004harmonic}.

\begin{definition}
Let $H$ and $G$ be groups. A map $f\colon H\to G$ is said to be a \textit{uniform embedding} if, for every finite set $S\subset H$, there is a finite set $T\subset G$ such that:

$$\forall x,y\in H \colon xy^{-1}\in S\implies f(x)f(y)^{-1}\in T$$
and, for every finite set $T\subset G$, there is $S\subset H$ finite such that
$$\forall x,y\in H \colon f(x)f(y)^{-1}\in T\implies xy^{-1}\in S.$$
\end{definition}

The implication (1) $\Rightarrow$ (2) in the next result had already been observed in \cite[Remark 5.4]{kmr}, and (5) $\Rightarrow$ (1) is an immediate consequence of \cite[Lemma 1]{zuk2000isoperimetric}.

\begin{proposition}\label{roe}
Let $G$ be a group. The following conditions are equivalent:
\begin{enumerate}
\item$G$ is locally finite;
\item The uniform Roe algebra $\ell^\infty(G)\rtimes_{\mathrm{r}}G$ is finite;
\item $G$ is not equidecomposable with a proper subset of itself;
\item No subset $A\subset G$ is equidecomposable with a proper subset of itself;
\item There is no injective uniform embedding from $\mathbb{Z}$ into $G$.
\end{enumerate}

\end{proposition}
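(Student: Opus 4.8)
The plan is to derive the equivalence of $(1)$, $(2)$, $(3)$, $(4)$ for free from Theorem \ref{inha}, and then to splice condition $(5)$ into the chain by proving $(1) \Rightarrow (5)$ and citing $(5) \Rightarrow (1)$. For the first part I apply Theorem \ref{inha} to the action of $G$ on itself by left translation: here $\ell^\infty(X) \rtimes_r G$ with $X = G$ is exactly the uniform Roe algebra, and, as already noted after Definition \ref{lf}, the left-translation action is locally finite precisely when $G$ is, since the $H$-orbit of an element $x$ is the coset $Hx$, which is finite if and only if $H$ is finite. Thus conditions $(1)$, $(2)$, $(3)$, $(4)$ are nothing but the four conditions of Theorem \ref{inha} specialized to this action, and their equivalence is immediate.

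The genuinely new point is $(1) \Rightarrow (5)$, which I would prove in contrapositive form: from an injective uniform embedding $f \colon \mathbb{Z} \to G$ I construct a finitely generated subgroup with an infinite orbit. Applying the first (bornologous) half of the uniform-embedding definition to the finite set $\{1\} \subset \mathbb{Z}$ produces a finite set $T \subset G$ with $f(n+1)f(n)^{-1} \in T$ for every $n$. Put $H := \langle T \rangle$, a finitely generated subgroup. A telescoping product gives $f(n)f(0)^{-1} = \bigl(f(n)f(n-1)^{-1}\bigr)\cdots\bigl(f(1)f(0)^{-1}\bigr) \in H$ for all $n \geq 0$, so that $\{f(n) : n \geq 0\} \subset Hf(0)$. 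Since $f$ is injective, the orbit $Hf(0)$ of $f(0)$ under the left action of $H$ is infinite, so this action is not locally finite and neither is $G$. Only injectivity and the bornologous half of the definition are needed for this step.

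The remaining implication $(5) \Rightarrow (1)$ is where the only real subtlety lies, and I would handle it exactly as flagged in the text, by invoking \cite[Lemma 1]{zuk2000isoperimetric}. In contrapositive form: if $G$ is not locally finite it contains an infinite finitely generated subgroup $H = \langle S \rangle$ with $S$ finite and symmetric, and that lemma furnishes an injective bi-infinite geodesic-type path $f \colon \mathbb{Z} \to H$; the bornologous condition holds because consecutive values differ by a single generator (so points at bounded $\mathbb{Z}$-distance map into a product of boundedly many elements of $S$), while the properness condition holds because word length grows along the path. Verifying these two halves and, in particular, upgrading a one-sided ray into a genuine two-sided embedding of $\mathbb{Z}$ is the part I expect to require the most care; since it is precisely the content of \cite[Lemma 1]{zuk2000isoperimetric}, I would lean on that citation rather than reconstruct the combinatorial path by hand.
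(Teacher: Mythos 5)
Your proof is correct, and it differs from the paper's in one genuine respect: how condition (5) gets tied to the rest. The paper closes a single cycle $(1)\Rightarrow(2)\Rightarrow(3)\Rightarrow(4)\Rightarrow(5)\Rightarrow(1)$, proving $(4)\Rightarrow(5)$ by citing \cite[Lemma 3.2]{kmr} (which transports equidecomposability along injective uniform embeddings) together with the observation that $\mathbb{N}\subset\mathbb{Z}$ is equidecomposable with a proper subset of itself. You instead prove $(1)\Rightarrow(5)$ directly: from an injective uniform embedding $f\colon\mathbb{Z}\to G$, the bornologous half of the definition applied to $S=\{1\}$ gives a finite $T$ with $f(n+1)f(n)^{-1}\in T$, and telescoping puts the infinite set $\{f(n):n\geq 0\}$ inside the orbit $\langle T\rangle f(0)$, so $\langle T\rangle$ is an infinite finitely generated subgroup. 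This argument is sound (injectivity and one half of the uniform-embedding condition are indeed all that is used), and together with the cited $(5)\Rightarrow(1)$ it splices $(5)$ into the equivalence just as well. What your route buys is self-containedness: it replaces the external equidecomposability-transfer lemma with two lines of elementary group theory. What the paper's route buys is thematic economy: it stays entirely in the language of equidecompositions, linking $(4)$ and $(5)$ directly, and yields a single directed cycle through all five conditions. Both treatments of $(5)\Rightarrow(1)$ coincide, resting on \cite[Lemma 1]{zuk2000isoperimetric}; your added sketch of why a bi-infinite geodesic is an injective uniform embedding is accurate but, as you note, not a replacement for the citation.
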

\begin{proof}

The implications (1) $\Rightarrow$ (2) $\Rightarrow$ (3) $\Rightarrow$ (4) (and (4) $\Rightarrow$ (1)) are a consequence of Theorem \ref{inha}. 

(4) $\Rightarrow$ (5). This follows from the fact that $\mathbb{N}\subset \mathbb{Z}$ is equidecomposable with a proper subset of itself and \cite[Lemma 3.2]{kmr}.

(5) $\Rightarrow$ (1). This is a consequence of \cite[Lemma 1]{zuk2000isoperimetric}.
\end{proof}

\begin{remark}
After this note was made available on arXiv, we became aware of \cite{pnc}, where it is shown that if a group is infinite and finitely generated, then its uniform Roe algebra is infinite.
\end{remark}

Any locally finite group acts on itself in a transitive, faithful and locally finite way. If a finitely generated group admits a faithful, transitive, locally finite action on a set, then the group is finite. This is in stark contrast to the fact that there are finitely generated, non-amenable groups which admit faithful, transitive, amenable actions on sets (see \cite{glasnermonod} for various examples).

A finitely generated group admits a faithful, locally finite action if and only if it is residually finite.

\begin{proposition}
If a group admits a faithful, locally finite action, then it embeds into a group which admits a faithful, locally finite and transitive action.
\end{proposition}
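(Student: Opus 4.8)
The plan is to produce the overgroup concretely as a permutation group. Fix a faithful, locally finite action of $G$ on a set $X$. A nontrivial group cannot act faithfully on a set with fewer than two points, so I may assume $|X|\geq 2$ (otherwise $G$ is trivial and the statement is clear). Faithfulness lets me identify $G$ with a subgroup of the symmetric group $\mathrm{Sym}(X)$. Inside $\mathrm{Sym}(X)$ I consider the normal subgroup $\mathrm{Sym}_{\mathrm{fin}}(X)$ of finitely supported permutations, and set $\widetilde{G}:=\langle G,\ \mathrm{Sym}_{\mathrm{fin}}(X)\rangle$. Because $\mathrm{Sym}_{\mathrm{fin}}(X)$ is normal in $\mathrm{Sym}(X)$, the group $G$ normalizes it, so $\widetilde{G}=\mathrm{Sym}_{\mathrm{fin}}(X)\cdot G$ and every element of $\widetilde{G}$ has the form $\sigma g$ with $\sigma\in\mathrm{Sym}_{\mathrm{fin}}(X)$ and $g\in G$; this normal form is what makes the orbit analysis tractable.

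Three of the required facts are immediate for the tautological action of $\widetilde{G}$ on $X$: the embedding $G\hookrightarrow\widetilde{G}$ holds by construction; the action is faithful since $\widetilde{G}\leq\mathrm{Sym}(X)$; and it is transitive because for any $x,y\in X$ the transposition $(x\,y)$ belongs to $\mathrm{Sym}_{\mathrm{fin}}(X)\subseteq\widetilde{G}$ and carries $x$ to $y$. All the difficulty is therefore concentrated in verifying that $\widetilde{G}\curvearrowright X$ is locally finite, which I expect to be the main obstacle: passing from $G$ to the strictly larger $\widetilde{G}$ means controlling the orbits of arbitrary finitely generated subgroups whose generators interleave elements of $G$ with finitely supported permutations.

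To handle this, I would prove that adjoining $\mathrm{Sym}_{\mathrm{fin}}(X)$ preserves local finiteness. Take a finitely generated subgroup $K=\langle\gamma_1,\dots,\gamma_m\rangle\leq\widetilde{G}$ and write $\gamma_j=\sigma_j g_j$ as above. Set $G_0:=\langle g_1,\dots,g_m\rangle\leq G$ and $F:=\bigcup_j\mathrm{supp}(\sigma_j)$, a finite subset of $X$. Local finiteness of $G\curvearrowright X$ guarantees that every $G_0$-orbit is finite, so $\Omega:=G_0F$ is a finite, $G_0$-invariant set. The crux is the following dichotomy, to be checked for each generator $\gamma_j^{\pm1}$ and each $y\in X$: if $y\notin\Omega$ then $\gamma_j^{\pm1}y\in G_0y$, while if $y\in\Omega$ then $\gamma_j^{\pm1}y\in\Omega$. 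The first alternative holds because $y\notin\Omega$ forces $g_jy\notin F$ (otherwise $y\in g_j^{-1}F\subseteq G_0F=\Omega$), so $\sigma_j$ fixes $g_jy$ and $\gamma_jy=g_jy\in G_0y$, with the analogous computation handling $\gamma_j^{-1}$; the second alternative holds because $\mathrm{supp}(\sigma_j)\subseteq F\subseteq\Omega$ forces $\sigma_j(\Omega)\subseteq\Omega$, and $\Omega$ is $G_0$-invariant.

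Granting the dichotomy, a straightforward induction on word length confines every $K$-orbit: if $x\in\Omega$ then $Kx\subseteq\Omega$, and if $x\notin\Omega$ then each generator application keeps the point inside the single $G_0$-orbit $G_0x$ (which is disjoint from $\Omega$), so $Kx\subseteq G_0x$. In both cases $Kx$ is finite, proving that $\widetilde{G}\curvearrowright X$ is locally finite and completing the argument. The only genuinely delicate point is the bookkeeping in the dichotomy—in particular the observation that once a point lies outside $\Omega$ no generator can ever move it into $\Omega$—but this is elementary once the normal form $\sigma g$ and the finite invariant set $\Omega$ are in place.
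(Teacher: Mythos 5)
Your proof is correct, and it takes a genuinely different (and in some ways cleaner) route than the paper's. The paper also adjoins finitely supported permutations, but in a different way: it picks a transversal $Y\subset X$ of the $G$-orbits, lets $S_Y$ be the finitely supported permutations of $Y$ acting on $X$ (fixing $X\setminus Y$), forms the \emph{free product} $H:=G*S_Y$ with its induced action on $X$, asserts that this action is transitive and locally finite, and then passes to the quotient of $H$ by the kernel of the action to restore faithfulness; the quotient still contains $G$ because $G$ already acted faithfully. You instead work entirely inside $\mathrm{Sym}(X)$, taking $\widetilde{G}=\langle G,\mathrm{Sym}_{\mathrm{fin}}(X)\rangle$, which buys you two things: faithfulness is automatic (no quotient step is needed), and normality of $\mathrm{Sym}_{\mathrm{fin}}(X)$ in $\mathrm{Sym}(X)$ gives the normal form $\sigma g$ on which your orbit analysis rests. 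Your write-up also supplies the one verification the paper leaves entirely implicit, namely local finiteness of the enlarged action; your dichotomy (points outside the finite $G_0$-invariant set $\Omega=G_0F$ never enter it and move within a single $G_0$-orbit, points inside stay inside) is a complete and correct proof of that step, and essentially the same argument would be needed to justify the paper's unproved assertion that the $G*S_Y$-action is locally finite. The only trade-off is that the paper's free-product formulation makes the action of the abstract group $H$ on $X$ definitionally obvious, whereas you need the (easy) observation that $\mathrm{Sym}_{\mathrm{fin}}(X)\cdot G$ is a subgroup; this is a negligible cost.
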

\begin{proof}

Let $G$ be a group which acts on a set $X$ in a faithful and locally finite way. 

Take a set $Y\subset X$ of representatives of all $G$-orbits, and let $S_Y$ be the group of finitely supported permutations of $Y$. Consider the natural action of $S_Y$ on $X$ and the associated action of $H:=G*S_Y$ on $X$. This action is transitive and locally finite. By taking the quotient of $H$ by the kernel of this action, we get a faithful, transitive, locally finite action on $X$ by a group which contains $G$.

\end{proof}

In analogy to what is known for amenable actions (\cite[Lemma 3.2]{juschenko2013cantor}), the following holds for locally finite actions:
\begin{prop}
Let $G$ be a group acting on a set $X$ in a locally finite way. If, for every $x\in X$, the stabilizer subgroup $G_x$ is locally finite, then $G$ is locally finite.
\end{prop}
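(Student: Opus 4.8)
The plan is to verify condition directly: show that every finitely generated subgroup of $G$ is finite. So fix a finitely generated subgroup $H \leq G$ and, assuming $X \neq \emptyset$, pick any point $x \in X$. First I would exploit local finiteness of the action to record that the orbit $Hx$ is finite. The orbit--stabilizer correspondence then identifies the coset space $H/(H \cap G_x)$ with $Hx$, so the stabilizer $H_x := H \cap G_x$ is a \emph{finite-index} subgroup of $H$; concretely $[H : H_x] = |Hx| < \infty$.

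Next I would invoke Schreier's lemma (equivalently, the Reidemeister--Schreier procedure): a finite-index subgroup of a finitely generated group is itself finitely generated. Applied to $H_x \leq H$, this yields that $H_x$ is finitely generated. But $H_x$ is a subgroup of the stabilizer $G_x$, which is locally finite by hypothesis; hence the finitely generated group $H_x$ must be finite. Finally, since $H$ contains the finite subgroup $H_x$ with finite index, we get $|H| = [H : H_x]\,|H_x| < \infty$. As $H$ was an arbitrary finitely generated subgroup, $G$ is locally finite.

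The whole argument is little more than an orbit--stabilizer computation, and the single nontrivial ingredient is the passage from ``finite index'' to ``finitely generated'', i.e.\ Schreier's lemma. That is the step I would flag as the crux: without knowing that $H_x$ is finitely generated, one cannot cash in the local finiteness of $G_x$ to conclude $H_x$ is finite. I would also note in passing the degenerate case $X = \emptyset$, in which both hypotheses hold vacuously while $G$ is unconstrained; presumably $X$ is tacitly assumed nonempty here (as it must be for the stabilizer condition to carry information).
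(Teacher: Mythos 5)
Your proof is correct and takes essentially the same route as the paper: both use orbit--stabilizer to see that the stabilizer $H_x = H \cap G_x$ has finite index in the finitely generated subgroup $H$, and then exploit local finiteness of $G_x$. The only divergence is in the closing step --- you invoke Schreier's lemma to get $H_x$ finitely generated, hence finite, hence $H$ finite, whereas the paper deduces that $H$ is locally finite from having a locally finite subgroup of finite index and then uses that $H$ is finitely generated; this is the same argument with the key ingredient made explicit (and your remark about the vacuous case $X=\emptyset$ applies equally to the paper's own proof).
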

\begin{proof}
Take $H<G$ finitely generated and $x\in X$. Since the action is locally finite, it follows that $Hx$ is finite. Hence, there is $H_0$ a subgroup of finite index in $H$ such that $H_0<G_x$. In particular, $H_0$ is locally finite. Therefore, $H$ is also locally finite. Since $H$ is finitely generated, we conclude that it is finite.
\end{proof}
\begin{remark}
One can define in a natural way an action of a group on a set $X$ to be supramenable if no subset of $X$ is equidecomposable with two disjoint proper subsets of itself. It is not true that if the action of a group $G$ is supramenable, and all the stabilizer subgroups are supramenable, then $G$ is supramenable. 

Indeed, it is well-known that the class of supramenable groups is not closed by taking extensions (the lamplighter group $\mathbb{Z}_2\wr\mathbb{Z}$ is such an example). Let then $G$ be a non-supramenable group which contains a supramenable normal subgroup $N$ such that $\frac{G}{N}$ is also supramenable. 

Consider the left action of $G$ on $\frac{G}{N}$. Since $\frac{G}{N}$ is supramenable, it follows easily that this action is supramenable. The stabilizer subgroups of the action are all equal to $N$, hence are supramenable.
\end{remark}

\begin{ack}
The author thanks Claire Anantharaman-Delaroche for calling his attention to the reference \cite{glasnermonod}.
\end{ack}

\bibliographystyle{acm}
\bibliography{bibliografia}
\end{document}